\documentclass[11pt,reqno,a4paper]{amsart}
\usepackage{amsmath,amsfonts,amssymb,amsthm,amsxtra,latexsym,amscd,enumerate,verbatim}
\usepackage[margin=1in]{geometry}
\usepackage[abbrev,lite,nobysame]{amsrefs}
\usepackage{graphics,graphicx}
\usepackage{subcaption}
\usepackage{tabularx}
\usepackage[usenames,dvipsnames]{color}
\usepackage{times}
\usepackage{bbm}
\usepackage[colorlinks=true, pdfstartview=FitV, linkcolor=blue, citecolor=blue, urlcolor=blue]{hyperref}
\usepackage{tikz}
\usepackage{enumerate,enumitem}
\usepackage{mathtools}
\usepackage[utf8]{inputenc}
\usepackage{amsmath,amsfonts,amssymb,amsxtra,latexsym,amscd,enumerate,amsthm,verbatim}
\usepackage[abbrev,lite,nobysame]{amsrefs}
\usepackage{float}
\usepackage{tikz}
\usepackage{pgfplots}
\usepackage{caption}
\usepackage[colorlinks=true]{hyperref}
\hypersetup{urlcolor=blue, citecolor=blue}
\usepackage{hyperref}
\numberwithin{equation}{section}
\def\dd{{\mathrm{d}}}

\newcommand{\R}{\mathbb{R}}

\newcommand{\defeq}{\mathrel{\mathop:}=}

\newcommand{\be}{\begin{equation}}
\newcommand{\ee}{\end{equation}}
\newcommand{\Per}{\operatorname{Per}}

\newtheorem{lemma}{Lemma}[section]
\newtheorem{prop}{Proposition}[section]
\newtheorem{theorem}{Theorem}
\newtheorem{corollary}{Corollary}[section]
\newtheorem{remark}{Remark}[section]

\pgfplotsset{compat=1.18} 
\title[An exact Delaunay flux identity]{An exact Delaunay flux identity for compact equilibria in the liquid drop model}

\author{Averkios Averkiou}
\address{Averkios Averkiou, Department of Mathematical Sciences, University of Bath, BA2 7AY, United Kingdom}
\email{aa4119@bath.ac.uk}

\author{Manuel del Pino}
\address{Manuel del Pino, Department of Mathematical Sciences, University of Bath, BA2 7AY, United Kingdom}
\email{mdp59@bath.ac.uk}

\author{Monica Musso}
\address{Monica Musso, Department of Mathematical Sciences, University of Bath, BA2 7AY, United Kingdom}
\email{mm2683@bath.ac.uk}
\begin{document}
\begin{abstract}
We revisit the construction of compact liquid drop equilibria from Delaunay unduloids in \cite{dPMZ}. We evaluate exactly the coefficient arising in the final solvability condition of the Lyapunov-Schmidt reduction, identifying it as the product of the axial period and the normalised Delaunay flux. Using this identity and the monotonicity of the period, we establish the conjectured positivity of the coefficient and prove that its product with the period strictly increases with the neck radius. We also derive an exact relation between the volume and the surface area of one period and prove that both strictly decrease with the neck radius. These results extend the construction throughout the embedded unduloid family and yield compact equilibria of every sufficiently large mass, with reference neck radii converging to any prescribed interior value as the mass tends to infinity.
\end{abstract}

\maketitle

\section{Introduction}
The classical liquid drop model of Gamow \cite{Gamow}, subsequently developed by Bohr and Wheeler \cite{BohrWheeler}, describes an atomic nucleus as an incompressible drop subject to the competing effects of surface tension and Coulomb self-repulsion. For a drop occupying a bounded domain $\Omega\subset\R^3$, the associated energy reads
\begin{equation}\label{liquidenergy}
\mathcal E(\Omega)=\Per(\Omega)+\frac{1}{2}\int_\Omega\int_\Omega\frac{\dd x\,\dd y}{|x-y|},
\end{equation}
where $\Per(\Omega)$ denotes the De Giorgi perimeter, which is equal to the surface area when $\Omega$ has smooth boundary. Given $m>0$, smooth critical points of  \eqref{liquidenergy} with prescribed volume $|\Omega|=m$ satisfy the Euler-Lagrange equation
\begin{equation}\label{liquidequation}
H_{\partial\Omega}(x)+\int_\Omega\frac{\dd y}{|x-y|}=\lambda,\quad x\in\partial\Omega,
\end{equation}
where $H_{\partial\Omega}$ denotes the mean curvature of $\partial\Omega$ and $\lambda$ is the Lagrange multiplier associated with the volume constraint. For a survey of the liquid drop model, we refer to \cite{CMT}.

Balls satisfy \eqref{liquidequation} for every prescribed mass $m>0$, and their minimality and stability properties have been investigated in \cites{KnupferMuratov,Julin,BonaciniCristoferi,FigalliFuscoMaggiMillotMorini,ChodoshRuohoniemi}. Further work on the associated global minimisation problem addresses compactness and existence \cites{FrankLieb,FrankNam}, as well as nonexistence for sufficiently large masses \cites{KnupferMuratov,LuOtto,FrankKillipNam,Schulz}. Notably, the recent work \cite{ChodoshGianocca} established the threshold for existence and spherical minimality conjectured in \cite{ChoksiPeletier}, proving that, up to translation, balls are the unique global minimisers for $0<m\leq m_{*}=5(2-2^{\frac{2}{3}})(2^{\frac{2}{3}}-1)^{-1}$, while no global minimiser exists for $m>m_{*}$.

The sharp characterisation of the mass threshold for global minimality naturally motivates the study of non-spherical critical points of \eqref{liquidenergy}. Rigorous examples include the toroidal and double-toroidal solutions constructed in \cites{RenWeiTorus,RenWeiDouble} and the branches bifurcating from balls in \cites{FrankNonspherical,DeRegibusGrossiMusso}, while the numerical investigations in \cite{XuDu} also exhibit toroidal and Delaunay-type configurations. Finally, gluing methods have produced two further families of non-spherical equilibria, with \cite{dPMZ} constructing solutions for all sufficiently large masses that resemble a ``pearl necklace'' of Delaunay unduloids, and \cite{DelPinoFrankMusso} obtaining axially symmetric solutions for all sufficiently small masses, resembling two equal spheres joined by an approximately catenoidal neck.

\section{Delaunay unduloids}
The present work revisits the construction in \cite{dPMZ}
which produces compact equilibria satisfying
\eqref{liquidequation} for every sufficiently large mass,
with boundaries obtained by bending small normal perturbations of long
Delaunay unduloid segments around a large circle. Throughout, we use the convention that the mean curvature $H$ equals the sum of the principal curvatures.

Let $\Sigma(a)\subset\mathbb R^3$ denote the embedded Delaunay unduloid with neck radius $a\in\left(0,\frac 12\right)$ and constant mean curvature $H\equiv2$. Choosing the axial origin at a bulge, we parametrise $\Sigma(a)$ by
\begin{equation}\label{delaunayparametrisation}
Y_a(\theta,s)=\bigl(f_a(s)\cos\theta,f_a(s)\sin\theta,s\bigr),\quad (\theta,s)\in[0,2\pi)\times\mathbb R,
\end{equation}
where the generating profile satisfies the Cauchy problem
\begin{equation}\label{profileequation}
-\frac{f_a''}{\left(1+(f_a')^2\right)^{\frac32}}+\frac{1}{f_a\sqrt{1+(f_a')^2}}=2,\quad f_a(0)=1-a,\quad f_a'(0)=0.
\end{equation} The function $f_a:\R\to[a,1-a]$ is smooth, positive, even and periodic, with axial period hereafter denoted by $T_a$. In addition, $f_a$ is strictly decreasing in $(0,\frac{T_a}{2})$, with $
f_a\Big(\frac{T_a}{2}\Big)=a$ and $
f_a'\Big(\frac{T_a}{2}\Big)=0.$

In what follows, for ease of notation we write $f=f_a$, consider the parameters
\begin{equation*}
b\defeq1-a,\qquad c\defeq ab=a(1-a),
\end{equation*}
and denote the volume and surface area  associated
with one axial period $T_a$ by
\begin{equation}\label{periodgeometry}
V_a\defeq \pi\int_{-\frac{T_a}{2}}^{\frac{T_a}{2}}f_a(s)^2\,\dd s,
\qquad A_a\defeq 2\pi\int_{-\frac{T_a}{2}}^{\frac{T_a}{2}} f_a(s)\sqrt{1+(f_a'(s))^2}\,\dd s.
\end{equation}
A direct computation shows that the Delaunay equation
\eqref{profileequation} admits the conserved quantity
\begin{equation}\label{conservedquantity}
f^2-\frac{f}{\sqrt{1+(f')^2}}=-c,
\end{equation}
while solving for $f'$ on the decreasing half-period further gives
\begin{equation}\label{decreasingbranch}
f'=-\frac{\sqrt{(f^2-a^2)(b^2-f^2)}}{f^2+c},
\qquad 0<s<\frac{T_a}{2}.
\end{equation}
Furthermore, for $\gamma>0$ the rescaling $\Omega=\gamma^{\frac13}\widetilde\Omega$ and a redefinition of the Lagrange multiplier transform \eqref{liquidequation} into the rescaled Euler-Lagrange equation\begin{equation}\label{scaledequation}
H_{\partial\widetilde\Omega}(x)+\gamma\int_{\widetilde\Omega}\frac{\dd y}{|x-y|}=\lambda,\quad x\in\partial\widetilde\Omega, \qquad|\widetilde\Omega|=\gamma^{-1}m.
\end{equation}
For fixed $a\in(0,\frac12)$ and sufficiently large $n\in\mathbb N$,
the construction in \cite{dPMZ} bends a segment containing $n$ complete periods
of $\Sigma(a)$ around a circle of radius $R=\frac{nT_a}{2\pi}$
and applies a Lyapunov-Schmidt reduction to obtain a nearby equilibrium. The final scalar solvability condition determines the coupling parameter $\gamma$ and involves the coefficient
\begin{equation}\label{coefficient}
I_a\defeq \int_0^{\frac{T_a}{2}}
\frac{f}{\left(1+(f')^2\right)^{\frac52}}\Big[ff''\bigl(2-(f')^2\bigr)
+\bigl(1+3(f')^2\bigr)\bigl(1+(f')^2\bigr)\Big]\,\dd s.
\end{equation}
Provided $I_a>0$, \cite{dPMZ}*{Theorem 2} yields a bounded domain $\widetilde\Omega_{n,a}$ with smooth embedded boundary for every sufficiently large integer $n$. Its boundary satisfies the Euler-Lagrange equation in \eqref{scaledequation} for a coupling parameter $\gamma_{n,a}>0$ and a Lagrange multiplier $\lambda_{n,a}\in\mathbb R$ satisfying
\begin{equation}\label{dPMZestimates}
|\widetilde\Omega_{n,a}|=nV_a\bigl(1+o(1)\bigr), \qquad
\gamma_{n,a} =\mathcal{O}\left(\frac{1}{\log n}\right) \qquad\text{as }n\to\infty,
\end{equation}
with $o(1)\to 0$ as $n\to \infty$. Moreover, for any compact interval $K\subset(0,\frac12)$ such that
$\inf\limits_{a\in K}I_a>0$, the estimates in \eqref{dPMZestimates} are uniform in $K$, with continuous dependence on $a\in K$.
Returning to the original scaling gives an equilibrium $\Omega_{n,a}=(\gamma_{n,a})^{\frac13}\widetilde\Omega_{n,a}$
with physical mass
\[m_n(a)\defeq|\Omega_{n,a}|=\gamma_{n,a}|\widetilde\Omega_{n,a}|.\]
\subsection*{Statement of the main result}
The positivity condition for the coefficient $I_a$ required above was verified in \cite{dPMZ}*{Lemma 2.1} for
\[a\in(0,a_*]\cup\bigg[\frac14,\frac12\bigg),\]
where $a_*\in(0,\frac14)$ is sufficiently small, while the same result also provides the small-neck asymptotics $I_a=2a+o(a)$ as $a\to0^+$. Nevertheless, the positivity of $I_a$ was not established in $(a_*,\frac14)$, leaving the applicability of the construction in this range unresolved.

In addition, prescribing an arbitrary sufficiently large physical mass $m$ requires varying the reference neck, since fixing $a$ produces only a sequence of masses $m_n(a)$ indexed by $n$. The argument in \cite{dPMZ}*{Proof of Theorem 1} combines the small-neck asymptotics with the continuous dependence on $a$ to adjust the reference neck and realise every sufficiently large mass. Extending this argument to a neighbourhood of any interior reference neck requires control of how the leading mass coefficient varies with $a$, motivating the conjectures stated in \cite{dPMZ}*{Remark 2.2}, namely
\[I_a>0,\qquad \frac{\dd}{\dd a}(T_aI_a)>0
\qquad\text{for every }a\in\Big(0,\frac12\Big).\]
Our main result gives an exact formula for $I_a$ and establishes both conjectures.
\begin{theorem}\label{maintheorem}
For every $a\in(0,\frac12)$, the coefficient $I_a$ in \eqref{coefficient} satisfies the exact identity
\begin{equation}\label{fluxidentity}
I_a=a(1-a)T_a>0,
\end{equation}
and
\[\frac{\dd T_a}{\dd a}>0,\qquad \frac{\dd I_a}{\dd a}>0, \qquad \frac{\dd}{\dd a}\bigl(2T_aI_a\bigr)>0.
\]
In particular, the maps
\[
T_a:\Big(0,\frac12\Big)\rightarrow(2,\pi),\qquad I_a:\Big(0,\frac12\Big) \rightarrow\Big(0,\frac{\pi}{4}\Big),\qquad 2T_aI_a:\Big(0,\frac12\Big)
\rightarrow\Big(0,\frac{\pi^2}{2}\Big)
\]
are strictly increasing bijections.
\end{theorem}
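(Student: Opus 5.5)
The plan is to prove the identity \eqref{fluxidentity} by showing that, along the decreasing half-period, the integrand of \eqref{coefficient} equals the constant $2c$ plus an exact derivative of a quantity that vanishes at $s=0$ and $s=\frac{T_a}{2}$. Integrating then gives $I_a=2c\cdot\frac{T_a}{2}=cT_a$. As a consistency check, $I_a=2a+o(a)$ from \cite{dPMZ}*{Lemma 2.1} matches $cT_a\to 2a$ if $T_a\to 2$.

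Set $w\defeq(1+(f')^2)^{-1/2}$. The conserved quantity \eqref{conservedquantity} reads $fw=f^2+c$. Solving \eqref{profileequation} for $f''$ gives $f''=(1+(f')^2)\bigl(\frac1f-\frac2w\bigr)$. After substituting this, the integrand becomes a function of $(f,f',w)$ alone. Using $fw=f^2+c$ and $(f')^2=w^{-2}-1$, it can be rewritten in terms of $f$ and $w$ (with $f'$ appearing only through $(f')^2$). I will look for a primitive of the form $g=f\,f'\,P(f,w)$, where $P$ is a low-degree polynomial in $f$ and $w^{\pm1}$, for instance $P=\alpha f w^3+\beta w+\delta f w$. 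Such a $g$ vanishes at both endpoints because $f'=0$ there. I will then determine the coefficients by matching $g'$ against $(\text{integrand})-2c$, again using the formula for $f''$ and the conservation law. Guessing the right ansatz for $P$ is the computational heart of this step. If it fails, I would instead change variables $s\mapsto f$ via \eqref{decreasingbranch}, and verify directly that the resulting $f$-integral equals $c$ times the period integral, by exhibiting an antiderivative in $f$.

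For the period, I will use \eqref{decreasingbranch} together with the substitution $f^2=a^2\cos^2\varphi+b^2\sin^2\varphi$. This gives $\sqrt{(f^2-a^2)(b^2-f^2)}=(b^2-a^2)\sin\varphi\cos\varphi$ and $\dd f=(b^2-a^2)\sin\varphi\cos\varphi\,\dd\varphi/f$, hence
\[
T_a=2\int_0^{\pi/2}\Bigl(f+\frac{c}{f}\Bigr)\dd\varphi ,\qquad f=f(\varphi;a).
\]
The endpoint values follow immediately. As $a\to0^+$ we have $f\to\sin\varphi$ and $c\to0$, so $T_a\to2$. At $a=\frac12$ we have $f\equiv\frac12$ and $c=\frac14$, so $T_a\to\pi$.

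The main obstacle is to prove $\frac{\dd T_a}{\dd a}>0$. Differentiating under the integral sign, with $\partial_a f=(a\cos^2\varphi-b\sin^2\varphi)/f$ and $\partial_a c=1-2a$, gives
\[
\frac{\dd T_a}{\dd a}=2\int_0^{\pi/2}\Bigl[\partial_af\Bigl(1-\frac{c}{f^2}\Bigr)+\frac{1-2a}{f}\Bigr]\dd\varphi .
\]
Here $1-\frac{c}{f^2}$ changes sign where $f^2=ab$, so I need to exploit cancellation. The first attempt will be to pair $\varphi$ with $\frac\pi2-\varphi$, which exchanges the roles of $a$ and $b$ in $f^2$, and to show that the symmetrised integrand is nonnegative. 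The fallback is to express $T_a$ through complete elliptic integrals $K,E$ of modulus $k^2=1-a^2/b^2$. Monotonicity would then follow from the standard derivative formulas for $K$ and $E$ and the inequalities between them.

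Once $T_a$ is known to increase, the rest is immediate. Since $c=a(1-a)$ is positive and increasing on $(0,\frac12)$, the products $I_a=cT_a$ and $2T_aI_a=2cT_a^2$ are positive and strictly increasing. Their limits are $0$ at $a=0$, and $\frac{\pi}{4}$ and $\frac{\pi^2}{2}$ respectively at $a=\frac12$. Continuity then gives the stated bijections.
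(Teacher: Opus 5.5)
Nothing in your proposal is wrong, and it follows the paper's route. Your ansatz is aiming at the paper's primitive $f^2f'(1+(f')^2)^{-\frac32}$, i.e.\ $P=fw^3$ ($\alpha=1$, the other two coefficients zero). Your $\varphi$ is exactly arclength measured from a neck, so $T_a=2\int_0^{\pi/2}(f+c/f)\,\dd\varphi$ is the paper's period formula $T_a=\int_0^\pi(\rho_a^2+c)\rho_a^{-1}\,\dd\sigma$; you reach it by direct substitution in \eqref{decreasingbranch} rather than through the oscillator equation for $\rho_a^2$.

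The gap is that the step you call the main obstacle, $\frac{\dd T_a}{\dd a}>0$, is left as a list of strategies. The diagnosis behind them is also off: no cancellation between different values of $\varphi$ is needed, because $\partial_af$ changes sign exactly where $1-c/f^2$ does. Set $q=b\sin^2\varphi-a\cos^2\varphi$. Using only $\sin^2\varphi+\cos^2\varphi=1$, one checks
\[
f^2-ab=(b-a)\,q,\qquad \partial_af=-\frac{q}{f},\qquad f^2-q^2=(a+b)^2\sin^2\varphi\cos^2\varphi .
\]
Hence $\partial_af\,(1-c/f^2)=-(b-a)q^2/f^3\le0$. Adding $(1-2a)/f=(b-a)f^2/f^3$ and using $a+b=1$ gives
\[
\frac{\dd T_a}{\dd a}=2(1-2a)\int_0^{\frac{\pi}{2}}\frac{\sin^2\varphi\cos^2\varphi}{\bigl(a^2\cos^2\varphi+b^2\sin^2\varphi\bigr)^{\frac32}}\,\dd\varphi>0 .
\]
This is exactly the paper's expression for $\frac{\dd T_a}{\dd a}$, obtained there in the variable $\delta=1-2a$, once one uses $1+\delta^2+2\delta\cos(2\sigma)=4\rho_a^2$. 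The pairing $\varphi\leftrightarrow\frac{\pi}{2}-\varphi$ and the elliptic-integral fallback are therefore unnecessary.

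The identity step is likewise only announced. Since $f$ and $w$ are tied by $fw=f^2+c$, matching coefficients in $(f,w)$ only makes sense modulo that relation. The quickest verification is that the integrand minus $\frac{\dd}{\dd s}\bigl(f^2f'(1+(f')^2)^{-\frac32}\bigr)$ equals $f\bigl(ff''+1+(f')^2\bigr)(1+(f')^2)^{-\frac32}$. Your formula for $f''$ turns this into $2fw-2f^2$, which equals $2c$.

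Two small justifications are also missing. As $a\to0^+$ the term $c/f$ does not tend to zero uniformly (at $\varphi=0$ it equals $b$), so justify $T_a\to2$ by the bound $c/f\le c/a=b<1$ and dominated convergence. The same bound $f\ge a$ justifies differentiating under the integral sign on compact subintervals of $(0,\frac12)$.

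With these pieces supplied, your last paragraph on $I_a=cT_a$ and $2T_aI_a=2cT_a^2$ completes the proof.
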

\begin{remark}
The identity \eqref{fluxidentity} gives an exact geometric interpretation of the coefficient $I_a$ arising in the scalar solvability condition by expressing it as the product of the axial period and the normalised Delaunay flux. Indeed, with the positive axial orientation and the convention $H\equiv2$, the axial flux across any circular section is given by
\[
\mathcal F_a\defeq 2\pi\bigg(\frac{f_a}{\sqrt{1+(f_a')^2}}-f_a^2\bigg)=2\pi a(1-a),
\]
thus the constant $c=a(1-a)$ in \eqref{conservedquantity} is the normalised axial flux $\frac{\mathcal F_a}{2\pi}$, and the identity $I_a=cT_a$ takes the form
\[
I_a=\frac{T_a}{2\pi}\mathcal F_a.
\]
\end{remark}
\begin{remark}
The factorisation $T_aI_a=a(1-a)T_a^2$ reduces the conjectured monotonicity of $T_aI_a$ to showing that $\frac{\dd T_a}{\dd a}>0$. Together with the volume monotonicity $\frac{\dd V_a}{\dd a}<0$ established in Proposition \ref{volumemonotonicity}, this allows the prescribed-mass argument of \cite{dPMZ} to be applied near any fixed $a_0\in(0,\frac12)$, yielding equilibria of every sufficiently large mass $m$ with reference necks $a_m$ satisfying $a_m\to a_0$ as $m\to\infty$.
\end{remark}

\subsection{Organisation of the paper}
Section \ref{sectionexactidentity} proves the exact flux identity \eqref{fluxidentity}, while Section \ref{monotonicity} develops the arclength representation and establishes the monotonicity properties of the axial period $T_a$, the solvability coefficient $I_a$, and the volume $V_a$ and surface area $A_a$ of one period. Section \ref{consequences} applies these results to the construction in \cite{dPMZ}. Finally, the appendix provides an independent proof of the flux identity using complete elliptic integrals.
\section{The exact flux identity}\label{sectionexactidentity}
Recalling the equation \eqref{profileequation} satisfied by the Delaunay generating profile $f=f_a$, we define
\begin{equation}\label{defphialpha}
\Phi_a(s)\defeq\frac{f}{\left(1+(f')^2\right)^{\frac{5}{2}}}
\Big[ ff''\bigl(2-(f')^2\bigr) +\bigl(1+3(f')^2\bigr)\bigl(1+(f')^2\bigr)
\Big], \quad s\in\mathbb R,
\end{equation}
so that the coefficient $I_a$ in \eqref{coefficient} reads
\[
I_a=\int_0^{\frac{T_a}{2}}\Phi_a(s)\,\dd s.
\]
The key algebraic observation is that the integrand $\Phi_a$ admits a decomposition into a multiple of the normalised Delaunay flux and an exact derivative.
\begin{lemma}\label{pointwiseidentity}
Fix $a\in\left(0,\frac{1}{2}\right)$. Then, for every $s\in\mathbb R$ it holds
\begin{equation}\label{fluxidentitypointwise}
\Phi_a(s)=2c+\frac{\dd}{\dd s}\left(\frac{f^2f'}{\left(1+(f')^2\right)^{\frac{3}{2}}}\right),
\end{equation}
and
\begin{equation}\label{exactidentity}
I_a=cT_a=a(1-a)T_a>0.
\end{equation}
\end{lemma}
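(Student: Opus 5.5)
The plan is to verify \eqref{fluxidentitypointwise} by a direct pointwise computation that uses only the profile equation \eqref{profileequation} and the conserved quantity \eqref{conservedquantity}, and then to integrate over a half-period. Write $p=f'$ and $W=1+p^2$, so that $W'=2pf''$.

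First, differentiate the candidate boundary term:
\[
\frac{\dd}{\dd s}\Big(f^2pW^{-\frac32}\Big)=W^{-\frac52}\Big[2fp^2W+f^2f''\bigl(W-3p^2\bigr)\Big]=W^{-\frac52}\Big[2fp^2W+f^2f''\bigl(1-2p^2\bigr)\Big].
\]
Subtract this from $\Phi_a$ as defined in \eqref{defphialpha}. The $f^2f''$ coefficients combine as $(2-p^2)-(1-2p^2)=1+p^2=W$. The remaining terms combine as $fW(1+3p^2)-2fp^2W=fW^2$. The difference therefore collapses to
\[
\Phi_a-\frac{\dd}{\dd s}\Big(f^2pW^{-\frac32}\Big)=W^{-\frac32}\bigl(f^2f''+fW\bigr)=f^2\frac{f''}{W^{\frac32}}+\frac{f}{W^{\frac12}}.
\]

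Next, use \eqref{profileequation} in the form $\frac{f''}{W^{3/2}}=\frac{1}{fW^{1/2}}-2$. The right-hand side above then becomes $\frac{2f}{W^{1/2}}-2f^2$. By the conserved quantity \eqref{conservedquantity}, this equals $2c$ identically in $s$, which proves \eqref{fluxidentitypointwise}.

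Finally, integrate \eqref{fluxidentitypointwise} over $(0,\frac{T_a}{2})$. The exact derivative contributes only boundary values of $f^2f'W^{-3/2}$. These vanish because $f'(0)=0$ and $f'(\frac{T_a}{2})=0$, both recorded after \eqref{profileequation}. Hence $I_a=2c\cdot\frac{T_a}{2}=cT_a=a(1-a)T_a$. Positivity follows since $a\in(0,\frac12)$ gives $c>0$, and $T_a>0$.

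The main obstacle is purely the bookkeeping in the first step: one must spot the right exact derivative and check that the leftover terms combine into something the ODE and first integral eliminate. Once the candidate $f^2f'W^{-3/2}$ is fixed, the remaining steps are routine.
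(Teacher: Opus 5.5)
Your proposal is correct and follows the paper's proof essentially step for step. You subtract the same exact derivative $\frac{f^2f'}{(1+(f')^2)^{3/2}}$, reduce the remainder $\frac{f(ff''+1+(f')^2)}{(1+(f')^2)^{3/2}}$ via the profile equation to $\frac{2f}{\sqrt{1+(f')^2}}-2f^2=2c$ using the conserved quantity, and then integrate with vanishing boundary terms at the bulge and the neck; the notation $p=f'$, $W=1+p^2$ is only a compact bookkeeping of the same computation.
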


\begin{proof}
We first note that the structure of \eqref{defphialpha} suggests differentiating
$\frac{f^2f'}{\left(1+(f')^2\right)^{\frac{3}{2}}}$, whose derivative contains a term involving $\frac{f''}{\left(1+(f')^2\right)^{\frac{5}{2}}}$. More precisely, a direct calculation gives
\[
\frac{\dd}{\dd s}\left(\frac{f^2f'}{\left(1+(f')^2\right)^{\frac{3}{2}}}
\right)=\frac{2f(f')^2\left(1+(f')^2\right)+
f^2f''\left(1-2(f')^2\right)}{\left(1+(f')^2\right)^{\frac{5}{2}}},
\]
which we combine with \eqref{defphialpha} to obtain
\begin{equation}\label{firstidentitylemma1}
\Phi_a-\frac{\dd}{\dd s}\left(\frac{f^2f'}{\left(1+(f')^2\right)^{\frac{3}{2}}}
\right)=\frac{f\left(ff''+1+(f')^2\right)}
{\left(1+(f')^2\right)^{\frac{3}{2}}}.
\end{equation}
On the other hand, multiplying \eqref{profileequation} by $f\left(1+(f')^2\right)^{\frac{3}{2}}$ yields $
ff''=1+(f')^2-2f\left(1+(f')^2\right)^{\frac{3}{2}}$, hence substituting into \eqref{firstidentitylemma1} one finds
\[\Phi_a-\frac{\dd}{\dd s}\left(\frac{f^2f'}{\left(1+(f')^2\right)^{\frac{3}{2}}}
\right)=2\frac{f}{\sqrt{1+(f')^2}}-2f^2.\]
The conserved quantity \eqref{conservedquantity} implies that the right-hand side of the above expression is equal to $2c=2a(1-a)$, which proves \eqref{fluxidentitypointwise}.

To conclude, we integrate \eqref{fluxidentitypointwise} over $\left[0,\frac{T_a}{2}\right]$ to obtain
\[I_a=cT_a+\left[\frac{f^2f'}{\left(1+(f')^2\right)^{\frac{3}{2}}}
\right]_{0}^{\frac{T_a}{2}}=a(1-a)T_a>0,\]
where we have used that $s=0$ and $s=\frac{T_a}{2}$ correspond to a bulge and its adjacent neck, respectively, hence
$f'(0)=f'\left(\frac{T_a}{2}\right)=0$.
\end{proof}
\section{Arclength representation and monotonicity}
\label{monotonicity}

Having established the exact identity $I_a=a(1-a)T_a $ in Lemma \ref{pointwiseidentity}, we devote this section to studying the dependence of the Delaunay period $T_a$ on the neck parameter $a$ and proving the remaining monotonicity and range assertions in Theorem \ref{maintheorem}. Throughout this section,  we retain the notation $f_a$ in order to make the dependence on the neck parameter explicit.

We recall from  \eqref{delaunayparametrisation} that the meridian generating curve of $\Sigma(a)$ is parametrised by
\[s\mapsto \bigl(f_a(s),s\bigr), \quad s\in\R,\]
where $s$ is the Euclidean coordinate along the axis of revolution and $f_a$ satisfies the nonlinear constant-mean-curvature equation \eqref{profileequation}. 

Although the dependence of the period $T_a$ on $a$ is not readily apparent in the axial parametrisation, parametrising the generating curve by arclength yields a considerably simpler description. As we demonstrate below, the squared radial coordinate then satisfies a second-order linear equation with constant coefficients, and the meridian length over one full Delaunay period is independent of $a$.

Let us be more precise. We define the signed arclength from the bulge
$s=0$ by
\begin{equation}\label{arclengthdefinition}
\sigma_a(s)\defeq \int_0^s\sqrt{1+\bigl(f_a'(\tau)\bigr)^2}\,\dd\tau,
\quad s\in\mathbb R,
\end{equation}
so that for a point of the generating curve with axial coordinate $s$, the quantity $\sigma=\sigma_a(s)$ accounts for its signed arclength coordinate measured from the bulge. As $\sigma_a:\mathbb R\to\mathbb R$ is a smooth increasing bijection, the axial coordinate may conversely be expressed as a function of
arclength $s=s_a(\sigma)$, with $s_a\defeq\sigma_a^{-1}$.

We then define
\begin{equation}\label{rhodefinition}
\rho_a(\sigma) \defeq f_a\bigl(s_a(\sigma)\bigr),\quad \sigma\in\mathbb R,
\end{equation}
so that $\sigma\mapsto \bigl(\rho_a(\sigma),s_a(\sigma)\bigr)$ is the same generating curve parametrised by signed arclength. In addition, differentiating the inverse relation $\sigma_a\bigl(s_a(\sigma)\bigr)=\sigma$
with respect to $\sigma$ and applying the chain rule to
\eqref{rhodefinition} provides the two useful identities
\begin{equation}\label{arclengthderivatives}
\frac{\dd s_a}{\dd\sigma}(\sigma)=\frac{1}
{\sqrt{1+\bigl(f_a'(s_a(\sigma))\bigr)^2}},\qquad
\frac{\dd\rho_a}{\dd\sigma}(\sigma)=\frac{f_a'(s_a(\sigma))}
{\sqrt{1+\bigl(f_a'(s_a(\sigma))\bigr)^2}}.
\end{equation}
Finally, evaluating \eqref{conservedquantity} at $s=s_a(\sigma)$ and using
\eqref{rhodefinition} and \eqref{arclengthderivatives} we obtain
\begin{equation}\label{saderivative}
\frac{\dd s_a}{\dd\sigma}(\sigma)=\frac{f_a(s_a(\sigma))^2+c}{f_a(s_a(\sigma))}=\frac{\rho_a(\sigma)^2+c}{\rho_a(\sigma)}.
\end{equation}
We establish the following result.
\begin{prop}\label{arclengthrepresentation}
Let $a\in(0,\frac12)$, $b=1-a$ and $c=ab$. For all $\sigma \in \R$, the radial profile in \eqref{rhodefinition} satisfies
\begin{equation*}
\frac{\dd^2}{\dd\sigma^2}\bigl(\rho_a(\sigma)^2\bigr)+4\rho_a(\sigma)^2= 2(a^2+b^2),
\end{equation*}
and hence
\begin{equation}\label{rhoexplicit}
\rho_a(\sigma)^2=\frac{a^2+b^2}{2} +\frac{b^2-a^2}{2}\cos(2\sigma).
\end{equation}
Consequently, $\sigma=0,\frac{\pi}{2},\pi$ correspond to a bulge, the adjacent neck, and the next bulge, respectively, with
\[s_a\Big(\frac{\pi}{2}\Big)=\frac{T_a}{2},\quad s_a(\pi)=T_a,\]
thus a full axial period $T_a$ corresponds to the fixed arclength interval $[0,\pi]$, independently of $a$.
\end{prop}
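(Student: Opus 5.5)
The plan is to work with $u(\sigma)\defeq\rho_a(\sigma)^2$ and derive its ODE from two facts: the conserved quantity \eqref{conservedquantity} in arclength form, and the unit-speed relation for the meridian. By \eqref{arclengthderivatives},
\[
\Big(\frac{\dd\rho_a}{\dd\sigma}\Big)^2+\Big(\frac{\dd s_a}{\dd\sigma}\Big)^2=1 .
\]
Combining this with \eqref{saderivative} gives
\[
\rho^2(\rho')^2=\rho^2-(\rho^2+c)^2,
\]
where primes now denote $\dd/\dd\sigma$.

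With $u=\rho^2$ we have $u'=2\rho\rho'$, so
\[
(u')^2=4u-4(u+c)^2 .
\]
Expanding and using $c=ab$, $a+b=1$, the right-hand side factors as $-4(u-a^2)(u-b^2)$. This is the squared form of \eqref{decreasingbranch} rewritten in arclength.

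Next I differentiate in $\sigma$. This gives $2u'u''=u'\bigl(4-8(u+c)\bigr)$, hence
\[
u''=2-4u-4c
\]
wherever $u'\neq0$. Since $1-2c=a^2+b^2$, this is exactly $u''+4u=2(a^2+b^2)$.

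The obstacle is the critical points $u'=0$ (bulges and necks), where we cannot cancel $u'$. Two ways around it:
\begin{itemize}[leftmargin=*]
\item Continuity: $u$ is smooth, and the zeros of $u'$ are isolated because $f_a'$ vanishes only at bulges and necks. So an identity between continuous functions that holds off a discrete set holds everywhere.
\item Direct verification: compute $u''=2(\rho')^2+2\rho\rho''$ directly from \eqref{profileequation}, using that $\rho''$ equals the curvature term.
\end{itemize}
I would use the continuity argument.

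To solve the linear ODE, note that $\sigma=0$ is the bulge. There $u(0)=b^2$, and $u'(0)=2\rho\rho'=0$ because $f_a'(0)=0$. Uniqueness for the linear constant-coefficient ODE then gives \eqref{rhoexplicit}:
\[
u(\sigma)=\frac{a^2+b^2}{2}+\frac{b^2-a^2}{2}\cos(2\sigma).
\]

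From this formula, $\rho_a$ decreases strictly from $b$ to $a$ on $[0,\frac{\pi}{2}]$ and increases back to $b$ on $[\frac{\pi}{2},\pi]$.

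It remains to match these arclength values with the axial ones. Since $\sigma_a$ is an increasing bijection, $s_a$ is strictly increasing. Also, $f_a$ is strictly decreasing on $(0,\frac{T_a}{2})$ with its first minimum $a$ attained at $s=\frac{T_a}{2}$. Hence the first positive $\sigma$ with $\rho_a(\sigma)=a$, namely $\sigma=\frac{\pi}{2}$, satisfies $s_a(\frac{\pi}{2})=\frac{T_a}{2}$.

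By evenness and periodicity of $f_a$, the next bulge occurs at $s=T_a$. It corresponds to the next maximum of $\rho_a$, which is $\sigma=\pi$, so $s_a(\pi)=T_a$. This shows that one axial period corresponds to the arclength interval $[0,\pi]$ for every $a$.
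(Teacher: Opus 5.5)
Your proof is correct. The identification of $\sigma=0,\frac{\pi}{2},\pi$ with the bulge, the neck and the next bulge is essentially the paper's argument. The difference is in how you derive the linear equation for $u=\rho_a^2$.

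\textbf{The paper's route.} It works at second order. It differentiates \eqref{arclengthderivatives} to get $\rho_a''=f_a''/(1+(f_a')^2)^2$ and computes $(\rho_a^2)''$. It then eliminates $f_af_a''$ using the Delaunay equation \eqref{profileequation}, and finally applies \eqref{conservedquantity} to reach $(\rho_a^2)''=2-4c-4\rho_a^2$. This holds pointwise at every $\sigma$, with no exceptional set.

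\textbf{Your route.} You stay at first order. Combining the unit-speed relation with \eqref{saderivative} gives the energy identity $(u')^2=-4(u-a^2)(u-b^2)$, which is the arclength form of \eqref{decreasingbranch} and shows the turning values $a^2,b^2$ at once. You then differentiate and divide by $u'$.

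\textbf{The cost of dividing by $u'$.} The first integral alone does not determine $u$. The constants $u\equiv a^2$ and $u\equiv b^2$ also satisfy it; indeed $f\equiv b$ satisfies \eqref{conservedquantity}. Neither constant satisfies $u''+4u=2(a^2+b^2)$. So your continuity step genuinely needs the extra input that $\{u'\neq0\}$ is dense.

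You supply this correctly. By \eqref{decreasingbranch} and $a<f_a<b$ on $(0,\frac{T_a}{2})$, together with evenness and periodicity, $f_a'$ vanishes only at integer multiples of $\frac{T_a}{2}$. Strict monotonicity on half-periods would already be enough for density.

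\textbf{What each approach buys.} Your argument is slightly lighter computationally and exposes the first integral in arclength form. The paper's argument avoids any division and needs no qualitative information about the zeros of $f_a'$.
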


\begin{proof}
We first derive the equation satisfied by $\rho_a(\sigma)^2=f_a(s_a(\sigma))^2$. Differentiating 
\eqref{arclengthderivatives} with respect to $\sigma$ and using the chain rule we find
\[
\frac{\dd^2\rho_a}{\dd\sigma^2}(\sigma)=\frac{f_a''(s_a(\sigma))}
{\left(1+\bigl(f_a'(s_a(\sigma))\bigr)^2\right)^2},
\]
which further yields
\begin{equation}\label{rho2second}
\frac{\dd^2}{\dd\sigma^2}\bigl(\rho_a^2\bigr)=\frac{2\bigl(f_a'(s_a(\sigma))\bigr)^2}
{1+\bigl(f_a'(s_a(\sigma))\bigr)^2}+\frac{2f_a(s_a(\sigma))f_a''(s_a(\sigma))}
{\left(1+\bigl(f_a'(s_a(\sigma))\bigr)^2\right)^2}.
\end{equation}
We now evaluate the Delaunay equation \eqref{profileequation} at
$s=s_a(\sigma)$ and multiply by $
\frac{f_a(s_a(\sigma))}
{\sqrt{1+\bigl(f_a'(s_a(\sigma))\bigr)^2}}$ to get
\[\frac{f_a(s_a(\sigma))f_a''(s_a(\sigma))}
{\left(1+\bigl(f_a'(s_a(\sigma))\bigr)^2\right)^2}=
\frac{1}{1+\bigl(f_a'(s_a(\sigma))\bigr)^2}-
2\frac{f_a(s_a(\sigma))}
{\sqrt{1+\bigl(f_a'(s_a(\sigma))\bigr)^2}}.\]
Substituting the above into \eqref{rho2second} and then using \eqref{conservedquantity} once more, we arrive at the equation
\begin{equation}\label{oscequation}
\frac{\dd^2}{\dd\sigma^2}\bigl(\rho_a^2\bigr)+4\rho_a^2=2-4c=2(a^2+b^2),
\end{equation}
where we have used that $a+b=1$ and $c=ab.$ Since the initial conditions are $\rho_{a}(0)^2=b^2$ and $\frac{\dd}{\dd \sigma}(\rho_{a}^2)(0)=0,$ the solution of  \eqref{oscequation} reads
\[\rho_a(\sigma)^2=\frac{a^2+b^2}{2}+\frac{b^2-a^2}{2}\cos(2\sigma).\]
Finally, in order to identify the corresponding axial coordinates we
differentiate \eqref{rhoexplicit} to get
\begin{equation}\label{derivrhosquared}
\frac{\dd}{\dd\sigma}\bigl(\rho_a(\sigma)^2\bigr)=2\rho_a(\sigma)\frac{\dd\rho_a}{\dd\sigma}(\sigma)=-(b^2-a^2)\sin(2\sigma),
\end{equation}
hence using that $\rho_a>0$ and $b>a$, \eqref{derivrhosquared} and
\eqref{rhoexplicit} show that $\rho_a$ strictly decreases 
from $b$ to $a$ on $[0,\frac{\pi}{2}]$ and then strictly increases from $a$ to $b$ on $[\frac{\pi}{2},\pi]$. Recalling \eqref{rhodefinition} and using that $s_a$ is strictly increasing with $s_a(0)=0$, we deduce that
$\sigma=\frac{\pi}{2}$ and $\sigma=\pi$ correspond to the first neck and the next bulge after $s=0$, respectively, where we conclude that
\[
s_a\left(\frac{\pi}{2}\right)=\frac{T_a}{2},\quad s_a(\pi)=T_a.\]
\end{proof}
\medskip

Proposition \ref{arclengthrepresentation} and \eqref{saderivative} allow us to study the dependence of \(T_a\) on \(a\) over the fixed arclength interval \([0,\pi]\), leading to the following monotonicity results.

\begin{prop}\label{coefficientmonotonicity}
For every $a\in(0,\frac{1}{2})$, it holds
\[
\frac{\dd T_a}{\dd a}>0, \qquad \frac{\dd I_a}{\dd a}>0, \qquad \frac{\dd}{\dd a}\bigl(2T_aI_a\bigr)>0,
\]
and the maps
\[
T_a:\Big(0,\frac{1}{2}\Big)\rightarrow(2,\pi),
\qquad
I_a:\Big(0,\frac{1}{2}\Big)
\rightarrow\Big(0,\frac{\pi}{4}\Big),
\qquad
2T_aI_a:\Big(0,\frac{1}{2}\Big)
\rightarrow\left(0,\frac{\pi^2}{2}\right)
\]
are strictly increasing bijections.
\end{prop}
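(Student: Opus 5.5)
Using Proposition \ref{arclengthrepresentation} together with \eqref{saderivative}, I will write the period as an integral over the fixed arclength interval $[0,\pi]$:
\[
T_a=s_a(\pi)=\int_0^\pi \frac{\rho_a(\sigma)^2+c}{\rho_a(\sigma)}\,\dd\sigma
=\int_0^\pi\Big(\rho_a(\sigma)+\frac{c}{\rho_a(\sigma)}\Big)\dd\sigma .
\]
By \eqref{rhoexplicit}, and since $b^2-a^2=b-a=1-2a$ and $a^2+b^2=1-2c$,
\[
\rho_a^2=\frac{1-2c}{2}+\frac{1-2a}{2}\cos(2\sigma)=a^2\sin^2\sigma+b^2\cos^2\sigma .
\]
Hence, writing $u=\cos^2\sigma$, we have $\rho_a^2=a^2+(b^2-a^2)u$. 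The integrand is then an explicit smooth function of $(a,\sigma)$ for $a\in(0,\frac12)$, and differentiation under the integral sign is justified because $\rho_a\ge a>0$.

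\textbf{Step 1.} The sign of $\frac{\dd T_a}{\dd a}$. Writing $g(\rho,c)=\rho+c/\rho$, we have $\frac{\dd}{\dd a}g=(1-c/\rho^2)\,\partial_a\rho+(1-2a)/\rho$. From $\rho^2=a^2\sin^2\sigma+b^2\cos^2\sigma$ and $b'=-1$ we get
\[
\rho\,\partial_a\rho=a\sin^2\sigma-b\cos^2\sigma .
\]
Substituting gives
\[
\frac{\dd g}{\dd a}=\rho^{-3}\Big[(\rho^2-ab)(a\sin^2\sigma-b\cos^2\sigma)+(b-a)\rho^2\Big].
\]
With $S=\sin^2\sigma$, $C=\cos^2\sigma$ and $S+C=1$, expanding $\rho^2-ab=a^2S+b^2C-ab(S+C)=(a-b)(aS-bC)$ simplifies the bracket to
\[
(b-a)\big[\rho^2-(aS-bC)^2\big]=(b-a)\,SC\,(a+b)^2=(b-a)\sin^2\sigma\cos^2\sigma ,
\]
where the middle equality is a routine check I will carry out. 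Therefore
\[
\frac{\dd T_a}{\dd a}=(1-2a)\int_0^\pi\frac{\sin^2\sigma\cos^2\sigma}{\rho_a^3}\,\dd\sigma>0 .
\]
This algebraic simplification is the main obstacle. If it fails in the form above, I will fall back to the substitution $t=\tan\sigma$, or to Fourier/Jensen-type comparison, and establish positivity that way.

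\textbf{Step 2.} Remaining monotonicity. By Lemma \ref{pointwiseidentity}, $I_a=cT_a$ and $2T_aI_a=2cT_a^2$. Since $c=a(1-a)$ is positive and strictly increasing on $(0,\frac12)$, and $T_a>0$ is strictly increasing by Step 1, both products have positive derivative.

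\textbf{Step 3.} Limits and ranges. As $a\to\frac12^-$ we have $\rho_a\to\frac12$ and $c\to\frac14$, so the integrand tends to $\frac12+\frac12=1$ uniformly and $T_a\to\pi$. Consequently $I_a\to\frac{\pi}{4}$ and $2T_aI_a\to\frac{\pi^2}{2}$.

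As $a\to0^+$, $\rho_a\to|\cos\sigma|$, so $\int_0^\pi\rho_a\,\dd\sigma\to2$. For the other term, $c\int_0^\pi\rho_a^{-1}\,\dd\sigma\le c\int_0^\pi(a^2\sin^2\sigma+b^2\cos^2\sigma)^{-1/2}\dd\sigma=O(a\log(1/a))\to0$, as seen from the elliptic-type logarithmic bound near $\sigma=\frac{\pi}{2}$. Hence $T_a\to2$, while $I_a\to0$ and $2T_aI_a\to0$.

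By continuity and strict monotonicity, each map is a bijection onto the stated open interval. Positivity of $I_a$ itself is already given by Lemma \ref{pointwiseidentity}.
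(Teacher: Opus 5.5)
Your proposal is correct and follows essentially the same route as the paper. The paper also writes $T_a=\int_0^\pi(\rho_a^2+c)/\rho_a\,\dd\sigma$ and differentiates under the integral sign, but in $\delta=1-2a$ rather than in $a$. This yields the same integrand of definite sign, since $1+\delta^2+2\delta\cos(2\sigma)=4\rho_a^2$ and $\sin^2(2\sigma)=4\sin^2\sigma\cos^2\sigma$; the paper then deduces the rest from $I_a=cT_a$ and the endpoint limits. Your identity $\rho_a^2-(aS-bC)^2=(a+b)^2SC$ checks out. The one sketchy step, $c\int_0^\pi\rho_a^{-1}\,\dd\sigma\to0$ as $a\to0^+$, closes by dominated convergence: $c/\rho_a\le c/a=b<1$, and $c/\rho_a\to0$ for every $\sigma\neq\frac{\pi}{2}$. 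The paper instead dominates the whole integrand by $1$.
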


\begin{proof}
As a preliminary step, we define
\[\delta\defeq b-a\in(0,1),\]
so that in view of $a+b=1$ and $c=ab$, we have
\[
a=\frac{1-\delta}{2},\qquad
b=\frac{1+\delta}{2},\qquad
c=\frac{1-\delta^2}{4}.
\]
Invoking Proposition \ref{arclengthrepresentation}, the identity in
\eqref{saderivative}, and the properties
$s_a(0)=0$ and $s_a(\pi)=T_a$, we write
\begin{equation}\label{perioddelta}
T_{\frac{1-\delta}{2}}=\int_0^\pi F_\delta(\sigma)\,\dd\sigma,
\quad \mbox{with}\quad F_\delta(\sigma)\defeq
\frac{1+\delta\cos(2\sigma)}
{\sqrt{1+\delta^2+2\delta\cos(2\sigma)}}.
\end{equation}
To prove the strict monotonicity of $T_{\frac{1-\delta}{2}}$, we fix
$\delta_0\in(0,1)$ and choose $\eta>0$ sufficiently small so that
$[\delta_0-\eta,\delta_0+\eta]\subset(0,1)$. For every $\delta$
in this interval and $\sigma\in[0,\pi]$ it holds
\[
1+\delta^2+2\delta\cos(2\sigma)=(1-\delta)^2+4\delta\cos^2\sigma
\geq(1-\delta_0-\eta)^2>0,
\]
thus $F_\delta(\sigma)$ and its derivative with respect
to $\delta$ are uniformly bounded, while differentiation
under the integral sign in \eqref{perioddelta} and the chain rule give
\begin{equation}\label{periodderivative}
\frac{\dd}{\dd\delta}T_{\frac{1-\delta}{2}}=-\delta\int_0^\pi
\frac{\sin^2(2\sigma)}
{\left(1+\delta^2+2\delta\cos(2\sigma)\right)^{\frac32}}
\,\dd\sigma<0, \quad \frac{\dd T_a}{\dd a}=
-2\Big.
\frac{\dd}{\dd\delta}T_{\frac{1-\delta}{2}}
\Big|_{\delta=1-2a}>0.
\end{equation}
To determine the range of $T_a$, we first observe that the integrand in \eqref{perioddelta} satisfies $0<F_\delta(\sigma)\leq1$ for every $\delta\in(0,1)$ and
$\sigma\in[0,\pi]$, while one can additionally verify that
\[\begin{aligned}
&F_\delta(\sigma)\rightarrow1 \quad\text{as}\quad \delta\to0^+
\quad\text{for every }\sigma\in[0,\pi],\\
&F_\delta(\sigma)\rightarrow|\cos\sigma|
\quad\text{as}\quad \delta\to1^- \quad\text{for a.e. }\sigma\in[0,\pi].
\end{aligned}\]
As a result, dominated convergence in \eqref{perioddelta} together with $\delta=1-2a$ gives
\begin{equation}\label{endpointsTa}
\lim_{a\to\frac12^-}T_a=\pi,
\qquad
\lim_{a\to0^+}T_a=2,
\end{equation}
where the strict monotonicity also provides
the desired bijection for $T_a$. Finally, using \eqref{exactidentity} and \eqref{periodderivative} we directly obtain 
\[\begin{aligned}
\frac{\dd I_a}{\dd a}&=(1-2a)T_a+a(1-a)\frac{\dd T_a}{\dd a}>0,\\
\frac{\dd}{\dd a}\bigl(2T_aI_a\bigr)&=
2(1-2a)T_a^2+4a(1-a)T_a\frac{\dd T_a}{\dd a}>0. 
\end{aligned}\] 
Likewise, the endpoint limits for $T_a$ in \eqref{endpointsTa} yield
\begin{equation} \label{limitsinproof}
\lim_{a\to0^+}I_a=0,
\qquad
\lim_{a\to\frac12^-}I_a=\frac{\pi}{4}, \qquad \lim_{a\to0^+}2T_aI_a=0,
\qquad
\lim_{a\to\frac12^-}2T_aI_a=\frac{\pi^2}{2},
\end{equation}
while the remaining two bijections for $I_a$ and $2T_aI_a$ follow from the established strict monotonicity.
\end{proof}
\medskip

Taking into account the preceding results, we next derive an exact relation between the volume $V_a$ and surface area $A_a$ of one Delaunay period and determine their monotonicity properties and endpoint limits.
\begin{prop}\label{volumemonotonicity}
For every \(a\in(0,\frac12)\), the volume and the surface area of one period in \eqref{periodgeometry} satisfy the exact relation
\begin{equation}\label{areaidentity}
A_a=3V_a+\pi I_a.
\end{equation}
Moreover, both \(V_a\) and \(A_a\) are strictly decreasing functions of \(a\), and their endpoint limits are
\begin{equation}\label{volumearealimits}
\lim_{a\to0^+}V_a=\frac{4\pi}{3},\qquad
\lim_{a\to\frac12^-}V_a=\frac{\pi^2}{4},\qquad
\lim_{a\to0^+}A_a=4\pi,\qquad
\lim_{a\to\frac12^-}A_a=\pi^2.
\end{equation}
In particular, \(a\mapsto V_a\) is a bijection onto $\left(\frac{\pi^2}{4},\frac{4\pi}{3}\right),$ while 
$a\mapsto\frac{2T_aI_a}{V_a}$ and $a\mapsto\frac{A_a}{V_a}$ are strictly increasing, with endpoint limits
\begin{equation}\label{quotientlimits}
\lim_{a\to0^+}\frac{2T_aI_a}{V_a}=0,\qquad
\lim_{a\to\frac12^-}\frac{2T_aI_a}{V_a}=2,\qquad
\lim_{a\to0^+}\frac{A_a}{V_a}=3,\qquad
\lim_{a\to\frac12^-}\frac{A_a}{V_a}=4.
\end{equation}

\end{prop}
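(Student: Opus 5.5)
The plan is to work throughout in the arclength parametrisation of Proposition~\ref{arclengthrepresentation}. Monotonicity will be proved for $A_a$ directly, and the identity \eqref{areaidentity} will then transfer it to $V_a$.

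\emph{Step 1: the relation \eqref{areaidentity}.} I will use an integration-by-parts identity in the axial variable. Write $W\defeq\sqrt{1+(f')^2}$. From \eqref{profileequation} we have $ff''/W^3=1/W-2f$, and therefore
\[
\frac{\dd}{\dd s}\Big(\frac{ff'}{W}\Big)=\frac{(f')^2}{W}+\frac{ff''}{W^3}=W-2f .
\]
Hence
\[
\frac{\dd}{\dd s}\Big(\frac{f^2f'}{W}\Big)=\frac{f(f')^2}{W}+fW-2f^2=2fW-\frac{f}{W}-2f^2 .
\]
The right-hand equality uses $f(f')^2/W=fW-f/W$. Integrating over one period, the boundary term vanishes because $f'=0$ at bulges. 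This gives
\[
2\int fW=\int\frac{f}{W}+2\int f^2 .
\]
The conserved quantity \eqref{conservedquantity} gives $\int f/W=\int f^2+cT_a$. Combining the two and multiplying by $\pi$ yields $A_a=3V_a+\pi cT_a$. By \eqref{exactidentity} this is $A_a=3V_a+\pi I_a$.

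\emph{Step 2: monotonicity of $A_a$.} I pass to the variable $\delta=1-2a$ as in the proof of Proposition~\ref{coefficientmonotonicity}. Set $D\defeq1+\delta^2+2\delta\cos(2\sigma)=|1+\delta e^{2i\sigma}|^2$, so that $\rho_a^2=D/4$ by \eqref{rhoexplicit}. Since $\dd s_a/\dd\sigma\cdot\sqrt{1+(f')^2}=1$ by \eqref{arclengthderivatives}, the area element is $2\pi\rho_a\,\dd\sigma$. With $s_a(\pi)=T_a$ this gives
\[
A=2\pi\int_0^\pi\rho_a\,\dd\sigma=\pi\int_0^\pi|1+\delta e^{2i\sigma}|\,\dd\sigma .
\]
The map $g(\delta)\defeq\int_0^\pi|1+\delta e^{2i\sigma}|\,\dd\sigma$ is convex in $\delta$, being an integral of convex functions. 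Its derivative at $0$ is $\int_0^\pi\cos(2\sigma)\,\dd\sigma=0$. Because $D>0$ for $\delta<1$, differentiation under the integral is justified exactly as in the proof of Proposition~\ref{coefficientmonotonicity}. I also need strict convexity: the integrand is strictly convex in $\delta$ wherever $\sin(2\sigma)\neq0$. Together these give $g'(\delta)>0$ for $\delta\in(0,1)$, so $A_a$ is strictly decreasing in $a$.

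\emph{Step 3: monotonicity of $V_a$.} This follows at once from Step~1. We have $V_a=\frac13(A_a-\pi I_a)$, where $A_a$ is strictly decreasing and $I_a$ is strictly increasing by Proposition~\ref{coefficientmonotonicity}. Hence $V_a$ is strictly decreasing.

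\emph{Step 4: limits.} For the limits of $V_a$ I use
\[
V_a=\pi\int_0^\pi\rho_a(\rho_a^2+c)\,\dd\sigma=\frac{\pi}{4}\int_0^\pi\sqrt D\,\bigl(1+\delta\cos(2\sigma)\bigr)\,\dd\sigma ,
\]
which comes from \eqref{saderivative}. I then apply dominated convergence.
\begin{itemize}
\item As $\delta\to0$ the integrands tend to $1$. This gives $V\to\pi^2/4$ and $A\to\pi^2$.
\item As $\delta\to1$ we have $\sqrt D\to2|\cos\sigma|$ and $1+\delta\cos(2\sigma)\to2\cos^2\sigma$. This gives $A\to4\pi$ and $V\to\pi\int_0^\pi|\cos\sigma|^3\,\dd\sigma=4\pi/3$.
\end{itemize}
Continuity and strict monotonicity then give the bijection of $V_a$ onto $\bigl(\frac{\pi^2}{4},\frac{4\pi}{3}\bigr)$.

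\emph{Step 5: quotients.} The map $a\mapsto 2T_aI_a/V_a$ is strictly increasing, since its numerator is positive and increasing and its denominator is positive and decreasing. Similarly $A_a/V_a=3+\pi I_a/V_a$ is strictly increasing. The endpoint limits in \eqref{quotientlimits} follow from \eqref{limitsinproof} and \eqref{volumearealimits}.

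The step needing most care is the strict positivity of $g'$ in Step~2. If the convexity argument is felt to be too soft, a fallback is to write $g'(\delta)=\int_0^\pi(\delta+\cos 2\sigma)D^{-1/2}\,\dd\sigma$ and pair the points $\sigma$ and $\frac{\pi}{2}-\sigma$ to exhibit positivity directly.
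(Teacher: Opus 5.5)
Your proof is correct, and it reaches the monotonicity statements by a genuinely different and more economical route than the paper.

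Step 1 is the paper's argument in disguise. By \eqref{conservedquantity}, $f^2f'/W=f(f^2+c)f'$, and this is exactly the exact derivative the paper integrates over one period.

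The real differences are in Steps 2--3. The paper proves the two monotonicities independently. It differentiates the arclength formulas for $V_{\frac{1-\delta}{2}}$ and $A_{\frac{1-\delta}{2}}$ under the integral sign. It then integrates by parts in $\sigma$ to rewrite each derivative as a manifestly positive integral: $\pi\delta\int_0^\pi(\delta+\cos 2\sigma)^2D^{-3/2}\,\dd\sigma$ for the area, and a completed-square integrand for the volume. It uses \eqref{areaidentity} only to obtain the endpoint limits of $A_a$.

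You prove only the area monotonicity, via convexity of $\delta\mapsto|1+\delta e^{2i\sigma}|$. Since $\partial_\delta^2\sqrt D=\sin^2(2\sigma)D^{-3/2}$ and $g'(0)=0$, positivity of $g'$ on $(0,1)$ follows at once, with no integration by parts. You then transfer the monotonicity to $V_a=\frac13(A_a-\pi I_a)$ using Proposition \ref{coefficientmonotonicity}.

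Your route buys two things. It needs only one derivative computation. And since $A_a$ and $I_a$ are differentiable, it gives the quantitative bound $\frac{\dd V_a}{\dd a}<-\frac{\pi}{3}\frac{\dd I_a}{\dd a}<0$, which is the derivative form of volume monotonicity invoked later in the paper.

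The paper's route buys two other things. Its volume monotonicity is self-contained, independent of the monotonicity of $T_a$. It also gives explicit positive integral formulas for both derivatives.

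Your computation of the limits of $A_a$ directly by dominated convergence, rather than through \eqref{areaidentity}, is a minor variation; the two agree.
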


\begin{proof}
As a first step, we recall that $b=1-a,\,c=ab$ and use \eqref{profileequation} and \eqref{conservedquantity} to obtain the three identities
\begin{equation}\label{threeuseful}
\sqrt{1+(f_a')^2}=\frac{f_a}{f_a^2+c},
\qquad (f_a')^2=\frac{f_a^2}{(f_a^2+c)^2}-1, \qquad f_a''=\frac{f_a(c-f_a^2)}{(f_a^2+c)^3}.
\end{equation}
Next, using \eqref{threeuseful} we compute
\begin{equation}\label{areaexactderivative}
\frac{\dd}{\dd s}\left(f_a(f_a^2+c)f_a'\right)=2f_a\sqrt{1+(f_a')^2}-3f_a^2-c,
\end{equation}
so that integrating \eqref{areaexactderivative} over
$[-\frac{T_a}{2},\frac{T_a}{2}]$ and multiplying by $\pi$ gives \eqref{areaidentity}, where we have used that $f_a'\left(-\frac{T_a}{2}\right)=f_a'\left(\frac{T_a}{2}\right)=0$ together with \eqref{periodgeometry} and \eqref{exactidentity}.

To proceed, setting again $\delta= b-a=1-2a\in(0,1)$, we use the arclength coordinate of \eqref{arclengthdefinition} in combination with \eqref{rhodefinition}, \eqref{arclengthderivatives} and Proposition \ref{arclengthrepresentation} to write  
\begin{equation} \label{transformedvolumeint}
V_a=\pi\int_0^{T_a}f_a(s)^2\,\dd s=\pi\int_0^\pi\rho_a(\sigma)^2
\frac{\dd s_a}{\dd\sigma}(\sigma)\,\dd\sigma,
\end{equation}
which takes the equivalent form
\begin{equation}\label{volumedelta}
V_{\frac{1-\delta}{2}}=\frac{\pi}{4}\int_0^\pi
\bigl(1+\delta\cos(2\sigma)\bigr)
\sqrt{1+\delta^2+2\delta\cos(2\sigma)}
\,\dd\sigma.
\end{equation}
For $\delta$ in any compact subinterval of $(0,1)$, the integrand
in \eqref{volumedelta} and its derivative with respect to $\delta$
are uniformly bounded for $\sigma \in[0,\pi]$, hence differentiation under the
integral sign gives
\begin{equation} \label{derivdeltavolume}
\begin{aligned}
\frac{4}{\pi}
\frac{\dd}{\dd\delta}V_{\frac{1-\delta}{2}}&=\int_0^\pi
\bigg[\cos(2\sigma)\sqrt{1+\delta^2+2\delta\cos(2\sigma)}
+\frac{\bigl(1+\delta\cos(2\sigma)\bigr)
\bigl(\delta+\cos(2\sigma)\bigr)}
{\sqrt{1+\delta^2+2\delta\cos(2\sigma)}}\bigg]\dd\sigma\\
&=\delta\int_0^\pi\frac{(1+\delta^2)\bigl(1+\cos^2(2\sigma)\bigr)+4\delta\cos(2\sigma)}{\bigl(1+\delta^2+2\delta\cos(2\sigma)\bigr)^{\frac32}}\,\dd\sigma>0,
\end{aligned}
\end{equation}
where we integrated by parts and used that 
\[(1+\delta^2)\bigl(1+\cos^2(2\sigma)\bigr)
+4\delta\cos(2\sigma)=(1+\delta^2)\Big(\cos(2\sigma)+\frac{2\delta}{1+\delta^2}
\Big)^2+\frac{(1-\delta^2)^2}{1+\delta^2}>0.\]
As a consequence, one finds 
\begin{equation} \label{monotonicityvolumeinproof}
\frac{\dd V_a}{\dd a}=-2\Big.\frac{\dd}{\dd\delta}V_{\frac{1-\delta}{2}}
\Big|_{\delta=1-2a}<0.
\end{equation}
To prove \eqref{volumearealimits}, for notational convenience we first denote the integrand in \eqref{volumedelta} by
\[
G_\delta(\sigma)\defeq\bigl(1+\delta\cos(2\sigma)\bigr)\sqrt{1+\delta^2+2\delta\cos(2\sigma)},
\]
and observe that for every \(\delta\in(0,1)\) and \(\sigma\in[0,\pi]\) it holds 
\[
0<G_\delta(\sigma)\leq4,\qquad
\lim_{\delta\to0^+}G_\delta(\sigma)=1,\qquad
\lim_{\delta\to1^-}G_\delta(\sigma)=4|\cos\sigma|^3.
\]
As such, dominated convergence in \eqref{volumedelta} establishes the limits
\begin{equation}\label{volumelimitsinproof}
\lim_{a\to\frac{1}{2}^-}V_a=\frac{\pi^2}{4},
\qquad
\lim_{a\to0^+}V_a=\frac{4\pi}{3},
\end{equation}
while the continuity and strict monotonicity of $V_a$ also
show that $V_a:(0,\frac{1}{2})\to
(\frac{\pi^2}{4},\frac{4\pi}{3})$ is a bijection. In addition, \eqref{areaidentity} gives $\frac{A_a}{V_a}=3+\pi\frac{I_a}{V_a}$, which is strictly increasing since $I_a$ is positive and strictly increasing, whereas $V_a$ is positive and strictly
decreasing.

Employing the same change of variables $s=s_a(\sigma)$ as in
\eqref{transformedvolumeint} to the area formula in
\eqref{periodgeometry}, by \eqref{arclengthderivatives} and
Proposition \ref{arclengthrepresentation} we obtain
\[
A_a=2\pi\int_0^\pi\rho_a(\sigma)\,\dd\sigma,
\]
while using \eqref{rhoexplicit} and $\delta=1-2a$ we further write
\[
A_{\frac{1-\delta}{2}}=\pi\int_0^\pi\sqrt{1+\delta^2+2\delta\cos(2\sigma)}
\,\dd\sigma.
\]
Similarly to \eqref{derivdeltavolume}, differentiation under the
integral sign followed by integration by parts yields
\[
\frac{\dd}{\dd\delta}A_{\frac{1-\delta}{2}}=\pi\int_0^\pi
\frac{\delta+\cos(2\sigma)}
{\sqrt{1+\delta^2+2\delta\cos(2\sigma)}}
\,\dd\sigma=\pi\delta\int_0^\pi
\frac{\bigl(\delta+\cos(2\sigma)\bigr)^2}
{\bigl(1+\delta^2+2\delta\cos(2\sigma)\bigr)^{\frac{3}{2}}}
\,\dd\sigma>0,
\]
while combining \eqref{areaidentity} and \eqref{volumelimitsinproof} with the
endpoint limits for $I_a$ in Proposition
\ref{coefficientmonotonicity} we infer that
\begin{equation} \label{limitAainproof}
\lim_{a\to0^+}A_a=4\pi,\qquad \lim_{a\to\frac12^-}A_a=\pi^2.
\end{equation}
Finally, using \eqref{monotonicityvolumeinproof} and Proposition  \ref{coefficientmonotonicity} once more we deduce that 
\[
\begin{aligned}
\frac{\dd}{\dd a}
\Big(\frac{2T_aI_a}{V_a}\Big)&=\frac{\frac{\dd}{\dd a}\bigl(2T_aI_a\bigr)V_a
-2T_aI_a \frac{\dd V_a}{\dd a}}{V_a^2}>0, \\
\frac{\dd}{\dd a}\Big(\frac{I_a}{V_a}\Big)&=\frac{\frac{\dd I_a}{\dd a}V_a-I_a\frac{\dd V_a}{\dd a}}{V_a^2}>0,
\end{aligned}
\]
while the limits in \eqref{quotientlimits} follow from a combination of \eqref{limitsinproof}, \eqref{volumelimitsinproof} and \eqref{limitAainproof}.  
\end{proof}
\section{Consequences for Delaunay-like liquid drop equilibria}
\label{consequences}
We now return to the construction in \cite{dPMZ} and examine its dependence on the reference Delaunay unduloid. As we show below, the positivity of $I_a$ established by Theorem~\ref{maintheorem} permits the construction for every $a\in(0,\frac{1}{2})$, while the monotonicity results of Section~\ref{monotonicity} extend the prescribed-mass argument to a neighbourhood of any interior reference neck. 

Before stating these results, we record two corrections to \cite{dPMZ}*{(2.5)-(2.6)} concerning the leading coupling coefficient and the small-neck asymptotics of $I_a$. For clarity, we note that all the asymptotic estimates below involving $n$ are taken as $n\to\infty$, uniformly for
$a$ in a fixed nonempty compact interval $K\subset(0,\frac12)$.

In the projection following \cite{dPMZ}*{(6.5)}, the logarithmic term contains the integral
\[
\int_0^{2\pi}\int_{-\frac{T_a}{2}}^{\frac{T_a}{2}}
f_a(s)^2\sin^2\theta\,\dd s\,\dd\theta=V_a.
\]
Retaining this factor and using that $|\Omega_0|=V_a$ and $2\pi R=nT_a$, we arrive at the scalar solvability condition
\begin{equation}\label{correctedsolvability}
\frac{4\pi^2I_a}{nT_a}-\frac{2\pi V_a^2}{nT_a^2}\gamma\log n+\mathcal{O}\left(\frac{\gamma}{n}\right)=0,
\end{equation}
where $\Omega_0$ denotes the region enclosed by the reference unduloid within one axial period. Since $\frac{2\pi T_aI_a}{V_a^2}$ is positive and continuous in $K$, the fixed-point argument in \cite{dPMZ}*{Section 6} applies with the corrected leading coefficient in \eqref{correctedsolvability}, yielding
\begin{equation}\label{correctedcoupling}
\gamma_{n,a}=\frac{2\pi T_aI_a}{V_a^2}\frac{1}{\log n}\bigl(1+o(1)\bigr)
\quad\text{as} \quad n\to\infty.
\end{equation}
Regarding the small-neck asymptotics of $I_a$, Proposition \ref{arclengthrepresentation} together with \eqref{saderivative} and \eqref{exactidentity} gives
\[
\rho_a(\sigma)=\sqrt{b^2\cos^2\sigma+a^2\sin^2\sigma}, \qquad I_a=2ab\int_0^{\frac{\pi}{2}}\left(\rho_a+\frac{ab}{\rho_a}\right)\,\dd\sigma,
\]
where we recall that $b=1-a$. Due to the bounds $b\cos\sigma\leq\rho_a(\sigma)\leq b\cos\sigma+a\sin\sigma$ in $[0,\frac{\pi}{2}]$ we readily have $\int_0^{\frac{\pi}{2}}\rho_a\,\dd\sigma=b+\mathcal{O}(a)$, while for the remaining integral we use the change of variables $t=\cot\sigma$ to get
\[
\mathfrak J_a\defeq\int_0^{\frac{\pi}{2}}\frac{\dd\sigma}{\rho_a(\sigma)}=\int_0^\infty\frac{\dd t}{\sqrt{1+t^2}\sqrt{a^2+b^2t^2}}.
\]
Since $0\leq1-\frac{1}{\sqrt{1+t^2}}\leq\frac{t^2}{2}$ for $t\in[0,1]$ and $b\to 1$ as $a \to 0^{+}$, one finds
\[
\left|\mathfrak J_a-\int_0^1\frac{\dd t}{\sqrt{a^2+b^2t^2}}\right|
\leq\frac{1}{2b}\int_0^1t\,\dd t+\frac{1}{b}\int_1^\infty\frac{\dd t}{t^2}=\frac{5}{4b},
\]
hence we infer that
\[
\mathfrak J_a=\frac{1}{b}\log\left(\frac{b+\sqrt{a^2+b^2}}{a}\right)+\mathcal{O}(1)=\frac{|\log a|}{b}+\mathcal{O}(1).
\]
Therefore, substituting the above into the expression for $I_a$ we obtain
\[
I_a=2ab^2+2a^2b|\log a|+\mathcal{O}(a^2)=2a+2a^2|\log a|+\mathcal{O}(a^2)\quad\text{as} \quad a\to0^+.
\]
Returning to the corrected asymptotic formula \eqref{correctedcoupling}, we now
compute the physical mass. Under the scaling
$\Omega_{n,a}=(\gamma_{n,a})^{\frac 13}\widetilde\Omega_{n,a}$,
we have $m_n(a)=\gamma_{n,a}|\widetilde\Omega_{n,a}|$, hence the volume estimate \eqref{dPMZestimates} gives
\[
m_n(a)=\frac{n}{\log n}\frac{2\pi T_a I_a}{V_a}(1+o(1)) \quad\text{as}\quad n\to\infty.
\]
This motivates the definition of the
leading mass coefficient
\begin{equation}\label{Mcaligraphic}
\mathcal M(a)\defeq
\frac{2\pi T_a I_a}{V_a}= \frac{2\pi a(1-a)T_a^2}{V_a}, \quad a\in\Big(0,\frac12\Big),
\end{equation}
which satisfies $\mathcal M(a)>0$ and $\mathcal M'(a)>0$
due to Theorem \ref{maintheorem} and Proposition \ref{volumemonotonicity}.
\begin{corollary}\label{uniformconstruction}
Let $K\subset(0,\frac{1}{2})$ be a nonempty compact interval.
Then, there exists an integer $n_K\geq3$ such that for every integer
$n\geq n_K$ and every $a\in K$, the construction in
\cite{dPMZ} produces a bounded domain $\Omega_{n,a}$ with
smooth embedded boundary satisfying \eqref{liquidequation}.
The associated coupling parameters $\gamma_{n,a}>0$ and masses
$m_n(a)\defeq|\Omega_{n,a}|$ depend continuously on $a$ and
satisfy
\begin{align}
\gamma_{n,a}&=\frac{\mathcal M(a)}{V_a\log n}\bigl(1+r_n(a)\bigr),
\label{gammafullrange}\\
m_n(a)&=\frac{n}{\log n}\mathcal M(a)\bigl(1+\varepsilon_n(a)\bigr),
\label{massreduction}
\end{align}
where $r_n,\varepsilon_n\in C(K)$ and
\begin{equation}\label{uniformerror}
\sup_{a\in K}\bigl(|r_n(a)|+|\varepsilon_n(a)|\bigr)
\rightarrow0\qquad\text{as} \quad n\to\infty.
\end{equation}
Moreover, for every $\Lambda\in(0,\frac{8}{\pi})$ there exists
a unique $a_\Lambda\in(0,\frac{1}{2})$ such that
\begin{equation}\label{couplingvalue}
\frac{\mathcal M(a_\Lambda)}{V_{a_\Lambda}}=\Lambda,
\qquad
\gamma_{n,a_\Lambda}\log n\rightarrow\Lambda \quad\text{as}\quad n\to\infty.
\end{equation}
\end{corollary}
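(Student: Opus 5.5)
The proof has three parts: uniform applicability of the construction on $K$, the asymptotics \eqref{gammafullrange}--\eqref{massreduction}, and the existence and uniqueness of $a_\Lambda$.

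\emph{Uniform construction.} Theorem~\ref{maintheorem} gives $I_a=a(1-a)T_a>0$, and $a\mapsto I_a$ is continuous on $(0,\frac12)$. Hence $\inf_{a\in K}I_a>0$ for every compact $K$. The uniform version of \cite{dPMZ}*{Theorem 2} recalled after \eqref{dPMZestimates} then applies, but with the corrected solvability condition \eqref{correctedsolvability} in place of the original one. I will rerun the one-dimensional fixed-point step of \cite{dPMZ}*{Section 6} on \eqref{correctedsolvability}. Multiplying by $nT_a^2/(2\pi V_a^2\log n)$ rewrites it as
\[
\gamma=\frac{2\pi T_aI_a}{V_a^2\log n}+\mathcal O\Big(\frac{\gamma}{\log n}\Big).
\]
The leading coefficient $\frac{2\pi T_aI_a}{V_a^2}=\frac{\mathcal M(a)}{V_a}$ is continuous and bounded above and below by positive constants on $K$, because $T_a$, $I_a$ and $V_a$ are continuous and positive there. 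The contraction argument therefore gives $\gamma_{n,a}$ continuous in $a$, with $\gamma_{n,a}\log n\to\mathcal M(a)/V_a$ uniformly on $K$. Setting $r_n(a)\defeq\gamma_{n,a}V_a\log n/\mathcal M(a)-1$ gives \eqref{gammafullrange}.

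\emph{Mass asymptotics.} Since $m_n(a)=\gamma_{n,a}|\widetilde\Omega_{n,a}|$ and $|\widetilde\Omega_{n,a}|=nV_a(1+o(1))$ uniformly on $K$, multiplying gives
\[
m_n(a)=\frac{n}{\log n}\mathcal M(a)(1+r_n(a))(1+o(1)).
\]
Define $\varepsilon_n$ by this product. It is continuous in $a$ because $\gamma_{n,a}$ and $|\widetilde\Omega_{n,a}|$ are, and the sup bound \eqref{uniformerror} follows. The main obstacle is this step: one must check that the error terms in the reduction of \cite{dPMZ} are genuinely uniform in $a\in K$ and not merely pointwise. I would argue this from the fact that all Delaunay data, including the nondegeneracy constants of the linearised operator on $\Sigma(a)$, depend smoothly on $a$ and are uniformly controlled on compact $K$, together with the uniform positivity of $I_a$.

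\emph{The value $a_\Lambda$.} Set
\[
g(a)\defeq\frac{\mathcal M(a)}{V_a}=\frac{2\pi\, T_aI_a}{V_a^2}.
\]
The numerator $T_aI_a$ is strictly increasing by Theorem~\ref{maintheorem}, and $V_a$ is positive and strictly decreasing by Proposition~\ref{volumemonotonicity}. So $g$ is continuous and strictly increasing. For the endpoint limits, $2T_aI_a\to0$ as $a\to0^+$ while $V_a\to\frac{4\pi}{3}$, so $g\to0$. As $a\to\frac12^-$, $2T_aI_a\to\frac{\pi^2}{2}$ and $V_a\to\frac{\pi^2}{4}$, so
\[
g\to\pi\cdot\frac{\pi^2/2}{\pi^4/16}=\frac{8}{\pi}.
\]
Thus $g$ is a bijection onto $(0,\frac8\pi)$, which gives a unique $a_\Lambda$ with $g(a_\Lambda)=\Lambda$. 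Applying \eqref{gammafullrange} with $K=\{a_\Lambda\}$ then yields $\gamma_{n,a_\Lambda}\log n\to\Lambda$.
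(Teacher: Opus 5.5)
Your proposal is correct and follows the paper's proof essentially step for step. You use the uniform positivity of $I_a$ on $K$ to invoke the uniform construction, the corrected solvability condition \eqref{correctedsolvability} together with \eqref{dPMZestimates} to get \eqref{gammafullrange}--\eqref{uniformerror}, and the strict monotonicity and endpoint limits $0$ and $\frac{8}{\pi}$ of $\mathcal M(a)/V_a$ to obtain $a_\Lambda$. The uniformity in $a\in K$ that you flag as the main obstacle is not re-derived in the paper; it is taken as part of the input recalled from \cite{dPMZ}. Taking $K=\{a_\Lambda\}$ is harmless, though the paper uses a compact interval $K_\Lambda$ containing $a_\Lambda$.
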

\begin{proof}
Using \eqref{fluxidentity} and the compactness of $K$ we have
$\inf\limits_{a\in K}I_a>0$, thus the construction recalled above applies uniformly for $a\in K$, yielding a common threshold $n_K\geq 3$. The contraction arguments in \cite{dPMZ}*{Section 6} give continuous dependence of the coupling parameter and the boundary perturbation on $a$, and hence also of the physical mass. The uniform asymptotics in \eqref{correctedcoupling} and \eqref{dPMZestimates}, together with \eqref{Mcaligraphic} and the continuous dependence on $a$ yield \eqref{gammafullrange}-\eqref{uniformerror}, with $r_n,\varepsilon_n\in C(K)$. Finally, Theorem \ref{maintheorem} and Proposition \ref{volumemonotonicity} show that $a\mapsto\frac{\mathcal M(a)}{V_a}$ is continuous and strictly increasing, with endpoint limits $0$ and $\frac{8}{\pi}$. Accordingly, each $\Lambda\in(0,\frac{8}{\pi})$ determines a unique $a_\Lambda\in(0,\frac 12)$, therefore applying
\eqref{gammafullrange} and \eqref{uniformerror} on a compact
interval $K_\Lambda\subset(0,\frac12)$ containing $a_\Lambda$
gives the limit in \eqref{couplingvalue}.
\end{proof}
\medskip

For any fixed $a\in(0,\frac 12)$, Corollary \ref{uniformconstruction} provides a sequence of masses $m_n(a)$ indexed by $n$. The uniform asymptotics \eqref{massreduction} combined with the continuity of $m_n$ and the strict monotonicity of  $\mathcal M$ in \eqref{Mcaligraphic} allow every sufficiently large mass to be realised by choosing $n$ and then adjusting $a$ near any prescribed interior reference neck.

\begin{corollary}
Fix $a_0\in(0,\frac{1}{2})$. Then, every sufficiently large mass $m$ is
realised by a bounded domain $\Omega_{n_m,a_m}\subset\mathbb R^3$
with smooth embedded boundary satisfying \eqref{liquidequation},
where $n_m\in\mathbb N$, $a_m\in(0,\frac{1}{2})$, and
\[
|\Omega_{n_m,a_m}|=m, \qquad n_m\to\infty, \qquad a_m\to a_0 \quad\text{as}\quad m\to\infty.
\]
The integers $n_m$ may be chosen so that as $m\to\infty$,
\begin{align}
\frac{n_m}{\log n_m}&=\frac{m}{\mathcal M(a_0)}\bigl(1+o(1)\bigr), \label{nasymptotic}\\
n_m&=\frac{m}{\mathcal M(a_0)}
\left(\log m+\log\log m-\log\mathcal M(a_0)+o(1)\right), \label{ninversion}
\end{align}
where $\mathcal{M}(a)$ is defined in \eqref{Mcaligraphic}.
\end{corollary}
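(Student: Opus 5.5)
Fix $a_0\in(0,\frac12)$ and choose $\eta>0$ with $K\defeq[a_0-\eta,a_0+\eta]\subset(0,\frac12)$. By Corollary \ref{uniformconstruction}, for $n\ge n_K$ the map $a\mapsto m_n(a)$ is continuous on $K$ and satisfies \eqref{massreduction} uniformly. Write $g(n)\defeq n/\log n$, which is increasing for $n\ge3$ and satisfies $g(n+1)/g(n)\to1$. Since $\mathcal M$ is continuous and strictly increasing (Theorem \ref{maintheorem} and Proposition \ref{volumemonotonicity}), we have $\mathcal M(a_0-\eta)<\mathcal M(a_0)<\mathcal M(a_0+\eta)$.

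Given a large mass $m$, let $n_m$ be the smallest integer $n\ge n_K$ with $g(n)\mathcal M(a_0)\ge m$. Then
\[
g(n_m)\mathcal M(a_0)=m\,(1+o(1)),
\]
because $g(n_m-1)\mathcal M(a_0)<m$ and $g(n_m)/g(n_m-1)\to1$. This is \eqref{nasymptotic}. For large $m$, \eqref{massreduction} together with \eqref{uniformerror} gives
\[
m_{n_m}(a_0-\eta)=\frac{\mathcal M(a_0-\eta)}{\mathcal M(a_0)}\,m\,(1+o(1))<m,
\qquad
m_{n_m}(a_0+\eta)>m .
\]
By the intermediate value theorem there is $a_m\in K$ with $m_{n_m}(a_m)=m$. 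We take $\Omega_{n_m,a_m}$ as the required domain.

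To see that $a_m\to a_0$, repeat the argument with $\eta$ replaced by any smaller $\eta'$. For each fixed $\eta'$, the same inequalities hold for all large $m$, so we can pick $a_m$ in $[a_0-\eta',a_0+\eta']$. A diagonal choice of $\eta'\downarrow0$ as $m\to\infty$ then gives $a_m\to a_0$. Alternatively, apply \eqref{massreduction} directly: $m=g(n_m)\mathcal M(a_m)(1+o(1))$ combined with \eqref{nasymptotic} yields $\mathcal M(a_m)\to\mathcal M(a_0)$, and strict monotonicity and continuity of $\mathcal M$ on $K$ give $a_m\to a_0$. Also $n_m\to\infty$, since $g(n_m)\sim m/\mathcal M(a_0)$.

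Formula \eqref{ninversion} comes from inverting $g$. Set $x\defeq m/\mathcal M(a_0)$, so $n_m=x\log n_m\,(1+o(1))$. Taking logarithms gives $\log n_m=\log x+\log\log n_m+o(1)$, hence $\log n_m=\log x+\log\log x+o(1)$.

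The main obstacle is that \eqref{nasymptotic} only holds with a factor $(1+o(1))$, while \eqref{ninversion} needs an additive $o(1)$ inside the bracket, that is, precision to relative order $o(1/\log m)$. We avoid this by exploiting the freedom in choosing $n_m$. Define the target
\[
N(m)\defeq x\bigl(\log m+\log\log m-\log\mathcal M(a_0)\bigr)
\]
and choose $n_m\defeq\lceil N(m)\rceil$ rather than the minimal $n$. Since $n_m-N(m)=O(1)=o(x)$, \eqref{ninversion} holds by construction. A direct expansion, using $\log N=\log x+\log\log x+o(1)$, shows that $g(n_m)\mathcal M(a_0)=m(1+o(1))$. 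This is all the intermediate value step needs, so the existence argument above goes through unchanged with this $n_m$, and both \eqref{nasymptotic} and \eqref{ninversion} hold.
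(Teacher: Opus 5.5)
Your argument is correct and is essentially the paper's proof. Your final choice $n_m=\lceil N(m)\rceil$ differs by at most one from the paper's $\lfloor N(m)\rfloor$, so the second asymptotic formula holds by construction; the intermediate value step on a compact interval around $a_0$, followed by $\mathcal M(a_m)\to\mathcal M(a_0)$ via the uniform error bound and continuity of $\mathcal M^{-1}$, is exactly how the paper concludes. The preliminary minimal-$n$ choice and the diagonal argument in $\eta'$ are unnecessary but harmless detours; in fact the minimal choice already gives $g(n_m)\mathcal M(a_0)=m\bigl(1+\mathcal O(1/n_m)\bigr)$, which is precise enough for the second formula too.
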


\begin{proof}
We take $K=[a_-,a_+]\subset(0,\frac{1}{2})$ with
$a_-<a_0<a_+$, and consider the domains $\Omega_{n,a}$
provided by Corollary \ref{uniformconstruction} for
$n\geq n_K$ and $a\in K$. Following the prescribed-mass
argument in \cite{dPMZ}*{Proof of Theorem 1}, we first choose $n_m$ and then adjust the neck parameter.

More specifically, we set $\zeta(x)\defeq\frac{x}{\log x}$ for $x>e$, define $\mathcal M_0\defeq\mathcal M(a_0)>0$, and select
\begin{equation}\label{integerpart}
n_m=\Big\lfloor
\frac{m}{\mathcal M_0}
\left(\log m+\log\log m-\log\mathcal M_0\right)
\Big\rfloor,
\end{equation}
where $\lfloor \cdot \rfloor$ denotes the integer part function. Due to the estimates
\[
n_m=\frac{m\log m}{\mathcal M_0}(1+o(1)), \qquad
\log n_m=\log m+\log\log m-\log\mathcal M_0+o(1),
\]
one can verify that
\begin{equation}\label{targetlimit}
n_m\to\infty, \qquad \frac{m}{\zeta(n_m)}
=\frac{m\log n_m}{n_m}
\rightarrow\mathcal M_0 \quad \mbox{as}\quad m\to \infty,
\end{equation}
and therefore $n_m\geq n_K$ for all sufficiently large $m$.

The uniform mass asymptotics \eqref{massreduction} and \eqref{uniformerror} then yield 
\[
\frac{m_{n_m}(a_\pm)}{\zeta(n_m)}=\mathcal M(a_\pm)\bigl(1+\varepsilon_{n_m}(a_\pm)\bigr)\rightarrow\mathcal M(a_\pm) \quad\text{as}\quad m\to\infty,
\]
where \eqref{targetlimit} and the strict inequalities $\mathcal M(a_-)<\mathcal M_0<\mathcal M(a_+)$ further imply
\[
m_{n_m}(a_-)<m<m_{n_m}(a_+)
\]
for all sufficiently large $m$.

Next, the intermediate value theorem provides $a_m\in(a_-,a_+)$ satisfying $m_{n_m}(a_m)=m$, so that \eqref{massreduction}, \eqref{uniformerror} and \eqref{targetlimit} give
\[
\mathcal M(a_m)=\frac{m}{\zeta(n_m)\bigl(1+\varepsilon_{n_m}(a_m)\bigr)}\rightarrow\mathcal M_0=\mathcal M(a_0) \quad\text{as} \quad m\to\infty.
\]
Consequently, $a_m\to a_0$ as $m\to\infty$ by the continuity of $\mathcal M^{-1}$, while \eqref{nasymptotic} and \eqref{ninversion} follow from \eqref{targetlimit} and \eqref{integerpart}, respectively.
\end{proof}

\appendix
\section{The flux identity via elliptic integrals}
In this section, we provide an alternative evaluation of the coefficient
$I_a$ in \eqref{coefficient} through elliptic integral recurrences, independently of Lemma \ref{pointwiseidentity}.

For fixed $a\in(0,\frac{1}{2})$ we recall that $b=1-a,\,c=ab$ and set $d\defeq a^2+b^2=1-2c$. Moreover, we define
\[
k\defeq\sqrt{1-\frac{a^2}{b^2}}\in(0,1),
\]
and consider the complete elliptic integrals 
\begin{equation}\label{ellipticdefinitions}
K(k)\defeq\int_0^{\frac{\pi}{2}}\frac{\dd\theta}{\sqrt{1-k^2\sin^2\theta}},
\qquad E(k)\defeq\int_0^{\frac{\pi}{2}}\sqrt{1-k^2\sin^2\theta}\,\dd\theta.
\end{equation}
We also employ the auxiliary elliptic integrals
\begin{equation}\label{ellipticmoments}
J_j\defeq\int_a^b\frac{x^{2j}}{\sqrt{(x^2-a^2)(b^2-x^2)}}\,\dd x,
\qquad j\in\{-1,0,1,2,3\},
\end{equation}
and note that the change of variables
$x=b\sqrt{1-k^2\sin^2\theta}\,$ for $\theta\in(0,\frac{\pi}{2})$ gives
\[
\frac{\dd x}{\sqrt{(x^2-a^2)(b^2-x^2)}}=-\frac{\dd\theta}{b\sqrt{1-k^2\sin^2\theta}}.
\]
Using the above identity, we combine \eqref{ellipticdefinitions} and
\eqref{ellipticmoments} to find
\begin{equation}\label{ellipticbasicmoments}
J_0=\frac{K(k)}{b}, \qquad J_1=bE(k).
\end{equation}
On the decreasing branch $s\in(0,\frac{T_a}{2})$, the substitution $x=f_a(s)$ together with \eqref{decreasingbranch} leads to
\begin{equation}\label{periodradiusintegral}
\dd s=-\frac{x^2+c}{\sqrt{(x^2-a^2)(b^2-x^2)}}\,\dd x, \quad \frac{T_a}{2}=\int_a^b\frac{x^2+c}{\sqrt{(x^2-a^2)(b^2-x^2)}}\,\dd x = J_1+cJ_0,
\end{equation}
hence \eqref{ellipticbasicmoments} and \eqref{periodradiusintegral} allow to express the period in terms of the elliptic integrals in \eqref{ellipticdefinitions} as
\begin{equation}\label{ellipticperiod}
T_a=2\bigl(bE(k)+aK(k)\bigr).
\end{equation}
In view of \eqref{threeuseful}, we then rewrite the integrand in  \eqref{coefficient} as
\[\Phi_{a}(s)=\frac{c^3}{f_a(s)^2}+c(2-3c)+(4-9c)f_a(s)^2-5f_a(s)^4,
\]
so that the same change of variables $x=f_a(s)$ yields
\begin{equation}\label{ellipticcoefficient}
I_a=c^4J_{-1}+2c^2(1-c)J_0+6cdJ_1+(4-14c)J_2-5J_3.
\end{equation}
Next, in order to express $I_a$ in terms of $J_0$ and $J_1$ in \eqref{ellipticbasicmoments}, for $j=0,1,2$ and $x\in(a,b)$ we use the identity
\[(x^2-a^2)(b^2-x^2)=-x^4+dx^2-c^2\]
to compute
\begin{equation}\label{Recurrencediff}
\frac{\dd}{\dd x}\left(x^{2j-1}\sqrt{(x^2-a^2)(b^2-x^2)}\right)
=\frac{-(2j+1)x^{2j+2}+2jd\,x^{2j}-(2j-1)c^2x^{2j-2}
}{\sqrt{(x^2-a^2)(b^2-x^2)}}.
\end{equation}
Since the derivative in \eqref{Recurrencediff} is integrable in $(a,b)$ and $x^{2j-1}\sqrt{(x^2-a^2)(b^2-x^2)}$ vanishes at both endpoints, integrating provides the recurrence relations
\begin{equation}\label{ellipticrecurrence}
J_1=c^2J_{-1},\qquad 3J_2=2dJ_1-c^2J_0, \qquad 5J_3=4dJ_2-3c^2J_1.
\end{equation}
Using the first and third identities in \eqref{ellipticrecurrence} to eliminate $J_{-1}$ and $J_3$ from \eqref{ellipticcoefficient} and then the second
relation to simplify $J_2$, we arrive at
\[
\begin{aligned}
I_a=2c^2J_0+2cJ_1=2c\bigl(bE(k)+aK(k)\bigr) =cT_a,
\end{aligned}
\]
where we have used that $d+2c=1$, \eqref{ellipticbasicmoments} and
\eqref{ellipticperiod}.
\section*{Acknowledgements}
The research of M. del Pino is supported by the Royal Society
Research Professorship grant RP-R1-180114 and by the ERC/UKRI
Horizon Europe grant ASYMEVOL, EP/Z000394/1. The authors acknowledge the use of ChatGPT (OpenAI) to assist with selected calculations. They have independently verified all mathematical arguments and calculations and take full responsibility for the content of the paper.
\bibliographystyle{amsplain}
\bibliography{references}\end{document}